\newcommand{\Rmnum}[1]{\expandafter\@slowromancap\romannumeral #1@}
\begin{document}

\newtheorem{theorem}{Theorem}
\newtheorem{observation}[theorem]{Observation}
\newtheorem{corollary}[theorem]{Corollary}
\newtheorem{algorithm}[theorem]{Algorithm}
\newtheorem{definition}[theorem]{Definition}
\newtheorem{guess}[theorem]{Conjecture}
\newtheorem{claim}[theorem]{Claim}
\newtheorem{problem}[theorem]{Problem}
\newtheorem{question}[theorem]{Question}
\newtheorem{lemma}[theorem]{Lemma}
\newtheorem{proposition}[theorem]{Proposition}
\newtheorem{fact}[theorem]{Fact}

\makeatletter
  \newcommand\figcaption{\def\@captype{figure}\caption}
  \newcommand\tabcaption{\def\@captype{table}\caption}
\makeatother

\newtheorem{acknowledgement}[theorem]{Acknowledgement}

\newtheorem{axiom}[theorem]{Axiom}
\newtheorem{case}[theorem]{Case}
\newtheorem{conclusion}[theorem]{Conclusion}
\newtheorem{condition}[theorem]{Condition}
\newtheorem{conjecture}[theorem]{Conjecture}
\newtheorem{criterion}[theorem]{Criterion}
\newtheorem{example}[theorem]{Example}
\newtheorem{exercise}[theorem]{Exercise}
\newtheorem{notation}{Notation}
\newtheorem{solution}[theorem]{Solution}
\newtheorem{summary}[theorem]{Summary}

\newenvironment{proof}{\noindent {\bf
Proof.}}{\rule{3mm}{3mm}\par\medskip}
\newcommand{\remark}{\medskip\par\noindent {\bf Remark.~~}}
\newcommand{\pp}{{\it p.}}
\newcommand{\de}{\em}
\newcommand{\mad}{\rm mad}
\newcommand{\qf}{Q({\cal F},s)}
\newcommand{\qff}{Q({\cal F}',s)}
\newcommand{\qfff}{Q({\cal F}'',s)}
\newcommand{\f}{{\cal F}}
\newcommand{\ff}{{\cal F}'}
\newcommand{\fff}{{\cal F}''}
\newcommand{\fs}{{\cal F},s}
\newcommand{\s}{\mathcal{S}}
\newcommand{\G}{\Gamma}
\newcommand{\g}{(G_3, L_{f_3})}
\newcommand{\wrt}{with respect to }
\newcommand {\nk}{ Nim$_{\rm{k}} $  }
\newcommand {\dom}{ {\rm Dom}  }
 \newcommand {\ran}{ {\rm Ran}  }

\newcommand{\ch}{{\rm ch}}

\newcommand{\q}{\uppercase\expandafter{\romannumeral1}}
\newcommand{\qq}{\uppercase\expandafter{\romannumeral2}}
\newcommand{\qqq}{\uppercase\expandafter{\romannumeral3}}
\newcommand{\qqqq}{\uppercase\expandafter{\romannumeral4}}
\newcommand{\qqqqq}{\uppercase\expandafter{\romannumeral5}}
\newcommand{\qqqqqq}{\uppercase\expandafter{\romannumeral6}}

\newcommand{\qed}{\hfill\rule{0.5em}{0.809em}}

\newcommand{\var}{\vartriangle}

\title{{\large \bf Colouring of $S$-labeled planar graphs }}
%\title{{The properties of the hypercube-like}}

\author{Ligang Jin\thanks{Department of Mathematics, Zhejiang Normal University,  China. Email: ligang.jin@zjnu.cn. Grant number: NSFC 11801522 and QJD1803023} \and Tsai-Lien Wong\thanks{Department of Applied Mathematics, National Sun Yat-sun University, Kaohsiung, Taiwan. Email: tsailienwong@gmail.com. Grant number: 106-2115-M-110-001-MY2} \and Xuding Zhu\thanks{Department of Mathematics, Zhejiang Normal University,  China.  E-mail: xudingzhu@gmail.com. Grant Numbers: NSFC 11571319 and 111 project of Ministry of Education of China.}}

\maketitle

\begin{abstract}
	Assume $G$ is a graph and $S$ is a set of permutations of integers.
	An $S$-labeling of $G$ is a pair $(D,\sigma)$, where $D$ is an orientation of $G$
	and $\sigma: E(D) \to S$ is a mapping which assigns to each arc $e=(u,v)$ of $D$ a permutation
	$\sigma_e \in S$. 
	A proper $k$-colouring of $(D,\sigma)$ is a mapping 
	  $f: V(G) \to [k]=\{1,2, \ldots, k\}$  	such that  $\sigma_e(f(x)) \ne f(y)$ for each  arc $e=(x,y)$. 
	We say $G$ is   $S$-$k$-colourable if  any $S$-labeling $(D, \sigma)$ of $G$ has a proper $k$-colouring.
	The concept of $S$-$k$-colouring is a common generalization of many colouring concepts, including   $k$-colouring,  signed $k$-colouring,  signed   $Z_k$-colouring,  
	  DP-$k$-colouring,  group colouring and colouring of gained graphs. 
 We are interested in the problem
as for which subset $S$ of $S_4$, every planar graph is $S$-$4$-colourable.
We call such  a subset $S$  a good subset. The famous Four Colour Theorem 
is equivalent to say  that   $S=\{id\}$ is good. A result of  Kr\'{a}l,   
Pangr\'{a}c and Voss is equivalent to say that $S=\{id, (1234),(13)(24), (1432)\}$
 and $S=\{id, (12)(34),(13)(24), (14)(23)\}$ are not good.  
 These results are strengthened by a 
  very recent result of  Narboni and Tarkos, which implies that 
  $S=\{id, (12)(34)\}$ is not good and another  very recent result of
   Zhu  which implies that   $S=\{id, (12)\}$ is not good.  This paper proves 
   if $S$ is a susbet of $S_4$ containing $id$, then $S$ is good if and only if $S=\{id\}$.

\noindent {\bf Keywords:}
signed graph colouring, gained graphs, $S$-labeled graph, DP-colouring,    group colouring.

\end{abstract}

%% \linenumbers

\section{Introduction}

A {\em signed graph } is a pair $(G, \sigma)$, where $G$ is a graph and $\sigma: E(G) \to \{1,-1\}$ assigns to each edge $e$ a sign $\sigma_e \in \{1,-1\}$. Colouring of signed graphs has been studied in many papers. There are a few different definitions of colouring of signed graphs, all of them are natural generalizations of colouring of (unsigned) graphs. 

In
the
1980's,
Zaslavsky  
 \cite{Z} defined a  colouring of a signed graph $(G, \sigma)$ as a mapping $f: V(G) \to \{\pm k, \pm(k-1) \ldots, \pm 1, 0\}$ such that for any edge $e=xy$ of $G$, $f(x) \ne \sigma_e f(y)$. He distinguished between colourings of $(G, \sigma)$ using colour $0$ and that not using colour $0$. 
 In 2016, M\'{a}\v{c}ajov\'{a},   Raspaud and \v{S}koviera \cite{MRS}  modified the definition of $k$-colouring of 
 a  signed graph   as follows:
\begin{definition}
		\label{def1}
	Assume $(G,\sigma)$ is a signed graph and $k$ is a positive integer.
	Let 
	\[
	N_k=\begin{cases} \{\pm q, \pm(q-1) \ldots, \pm 1 \}, &\text{ if $k=2q$ is even}, \cr
	\{\pm q, \pm(q-1) \ldots, \pm 1,0 \}, &\text{ if $k=2q+1$ is odd}.
	\end{cases}
	\]
	A {\em proper $k$-colouring} of $(G, \sigma)$ is a mapping 
	$f: V(G) \to N_k$ such that  for any edge $e=xy$ of $G$, $f(x) \ne \sigma_e f(y)$. 
	We say $G$ is {\em signed $k$-colourbale} if for any signature $\sigma$ of $G$, 
	$(G, \sigma)$ has a proper $k$-colouring.  
\end{definition}

In 2017, Kang and Steffen introduced another type of  colouring of signed graphs \cite{KS2, KS}, which we call $Z_k$-colouring.
 
 \begin{definition}
 	\label{def2}
 	A  \emph{proper $Z_k$-colouring} of a signed graph $(G, \sigma)$ is a mapping $f: V(G) \to Z_k$ such that for each   edge $e=uv$, $f(u) \ne \sigma_e f(v)$.  Here   $-f(v)$ is the inverse of $f(v)$ in $Z_k$, i.e., it is equal to $-c(v) \pmod{k}$.  
 	We say a graph $G$ is  \emph{signed $Z_k$-colourable}  if for any signature $\sigma$ of $G$, $(G, \sigma)$ is has a proper  $Z_k$-colouring.
 \end{definition} 	
 
For odd integer $k$, a graph $G$ is signed $k$-olourable if and only if $G$ is signed $Z_k$-colourable.
However if $k$ is even, then there are graphs $G$ that are signed $k$-colourable but not signed $Z_k$-colourable and
also graphs that are signed $Z_k$-colourable but not signed $k$-colourable.
 
Another colouring of signed graphs adopts the view of graph homomorphisms.
The chromatic number of a graph $G$ can be defined as the minimum order of a graph $H$ such that $G$ admits a homomorphism to $H$.  Homomorphisms of signed graphs was studied in \cite{Reza}. For a signed graph
$(G, \sigma)$, a {\em switching} at vertex $v$ results another signed graph $(G, \sigma')$, where 
$\sigma'_e = -\sigma_e$ for all edges $e$ incident to $v$, and 

For two signed graph $(G, \sigma)$ and $(H, \tau)$, we write $(G, \sigma) \to (H, \tau)$, if there is a signed graph $(G, \sigma')$ obtained from $(G, \sigma)$ by switching at some vertices that admits a homomorphism to $(H, \tau)$ (i.e., a mapping from $V(G) \to V(H)$ that preserves the adjacency and also the sign the edges). Then the \emph{chromatic number} of a signed graph $(G, \sigma)$ is   defined in \cite{Reza} to be the minimum order of a signed graph $(H, \tau)$ such that $(G, \sigma) \to (H, \tau)$.

As generalizations of the Four Colour Theorem, M\'{a}\v{c}ajov\'{a},   Raspaud and \v{S}koviera \cite{MRS} conjectured that every planar graph is signed $4$-colourable, and Kang and Steffen \cite{St} conjectured that 
every  planar graph is signed  $Z_4$-colourable.
Very recently (one year after the submission of this paper), both conjectures are disproved.  Narboni and Tarkos   \cite{Tarkos} constructed a non-$4$-colourable signed planar graph, 
and Zhu \cite{Zhurefined} constructed a non-$Z_4$-colourable signed planar graph. 

In a colouring of a signed graph $(G, \sigma)$, the sign $\sigma_e$ of an edge $e$ 
is used to impose restriction on the colour pairs that can be assigned to the end vertices of $e$. 
There are two
possible signs for the edges, and hence there are two kinds of restrictions for the pairs of colours that can be assigned to the end vertices of an edge.   

It is natural to consider more  types of restrictions for pairs of colours that can
be assigned to the end vertices of an edge. Indeed, various colourings of graphs corresponding to
different restrictions have been studied in the literature.
 Below are three types of graph colourings that are examples of such constraints. 

In 1992,
Jaeger,  Linial, Payan  and   Tarsi \cite{JLPT1992} introduced the concept of group colouring.
Assume $\Gamma$ is an Abelian group, $D$ is an orientation of $G$ and $\sigma: E(D) \to \Gamma$ assigns to each arc $e=(u,v)$ an element $\sigma(e)$. A $\Gamma$-colouring of $(D, \sigma)$ is a mapping $f: V(G) \to \Gamma$ such that for any arc $e=(u,v)$ of $D$, $f(v) - f(u) \ne \sigma(e)$. We say $G$ is {\em $\Gamma$-colourable } if for any orientation $D$ of $G$ and any $\sigma:E(D) \to \Gamma$, there exists a $\Gamma$-colouring of $(D, \sigma)$.
The {\em group chromatic number} of $G$ is the minimum integer $k$ such that for any Abelian group $\Gamma$ 
of order $k$, $G$ is $\Gamma$-colourable  (cf. \cite{survey} for a survey on this subject).

In 1995, as a generalization of colouring of signed graphs, Zaslavsky \cite{Z2} defined the colouring of gain graphs as follows: A {\em gain graph} consists of a graph $G$, a {\em gain group}  $\Gamma$, and a {\em gain function} $\phi$, which assigns each orientation $e=(u,v)$ of an edge $uv$  a group element  $\phi(e)$ so that $\phi(e^{-1}) = \phi(e)^{-1}$, where $e^{-1} = (v,u)$ is the inverse of $e$. We denote a gain graph as 
$(G, \phi)$. A proper  $k$-colouring of $(G,\phi)$ assigns to each vertex $v$ of $G$ a colour $f(v)$ from the colour set $\{0\} \cup \{(i,\pi): i \in \{1,2,\ldots,k\}, \pi \in \Gamma\}$,  so that for each orientation $e=(u,v)$ of an edge $uv$ of $G$,  if $f(u)=0$ then $f(v) \ne 0$, and if $f(u)=(i,\pi)$, then $f(v) \ne (i, \pi \circ \phi(e))$ (here $\circ$ is the   product in $\Gamma$).  Note that if $\Gamma$ is an Abelian group, then 
a proper $1$-colouring of  $(G,\phi)$ is the same as the $\Gamma$-colouring of $(G, \phi)$ defined as above by 
Jaeger,  Linial, Payan  and   Tarsi \cite{JLPT1992}.

In 2018, Dvo\v{r}\'{a}k and Postle \cite{DP} defined DP-colouring of a graph as follows:
 Assume $G$ is a graph and $\sigma$ is a mapping that assigns to each orientation $e= (u,v)$ of an edge $uv$ a permutation $\sigma_e$ of integers  so that $\phi(e^{-1}) = \phi(e)^{-1}$. A proper $k$-colouring of $(G, \sigma)$ is a mapping $f: V(G) \to \{1,2,\ldots,k\}$ so that for each oriented edge $e=(u,v)$,
$f(u) \ne \sigma_e(f(v))$. We say $G$ is DP-$k$-colourable if for any $\sigma$, there exists a proper $k$-colouring of $(G, \sigma)$. (The definition of DP-$k$-colouring given above is different but equivalent to the definition given 
in \cite{DP}).  It was shown in \cite{DP} that every DP-$k$-colourable graph is $k$-choosable, and the concept of DP-colouring is used to show that every planar graph without cycles of lengths $4,5,6,7,8$ is $3$-choosable, which is a problem remained open for 15 years. 

In each of the colourings defined above, edges of $G$ are labeled, and certain pairs of colours are forbidden to be assigned to the end vertices of edges with given label. In this sense, the   labels play the same role as the sign
of the edges in a signed graph. However, instead of $+$ and $-$ signs, we may have many different signs. Adopting this point of view, we introduce the concept of colouring of   {\em $S$-labeled graphs } as follows.

\begin{definition}
	Assume $S$ is  a set  of permutations of 
	positive integers.
	An {\em $S$-labeling} of $G$ is an orientation $D$ of $G$ together with a mapping $\sigma: E(D) \to S$.
	The pair $(D, \sigma)$ is called an {\em $S$-labeled graph}.
	A {\em proper $k$-colouring } of $(D, \sigma)$ is a mapping $f: V(D) \to [k]=\{1,2,\ldots, k\}$ such that for each 
	arc $e=(x,y)$ of $D$, 
	$\sigma_e(f(x)) \ne f(y)$.
	A graph $G$ is called {\rm $S$-$k$-colourable } if $(D, \sigma)$ is  $k$-colourable for every $S$-labeling $(D, \sigma)$ of $G$.
\end{definition}

We denote by $id$ the identity permutation on the set $[k]=\{1,2,\ldots,k\}$,
 and denote by $S_k$ the symmetric group of order $k$.
The following follows easily from the definition.
 \begin{enumerate}
 	\item If $S=\{id\}$, then $S$-$k$-colourability is equivalent to $k$-colourability.
 	\item If $S=\{id, (12)(34)\ldots ((2q-1)(2q))\}$ where $q = \lfloor k/2 \rfloor$, then
   $S$-$k$-colourability is equivalent to signed   $k$-colourability.
   \item  If $S=\{id, (12)(34)\ldots ((2q-1)(2q))\}$ where $q = \lceil k/2 \rceil -1$,    then
   $S$-$k$-colourability is equivalent to signed   $Z_k$-colourability.
   \item If $S=S_k$, then $S$-$k$-colourability 
   is equivalent to DP-$k$-colourability.
   \item If $S=Z_k$, then $S$-$k$-colourability is the same as   group $Z_k$-colourability.
   \item Assume $\Gamma$ is a  group with $|\Gamma|=n$ and $k$ is a positive integer.
   Let $\tau: \Gamma \to [n]$ be a one-to-one correspondence from $\Gamma $ to $[n]$.
   Let $k'=kn+1$ and for each $\pi \in \Gamma$, let 
   $\pi'$ be the permutation of $[k']$ defined as 
   $\pi'(nj+r) = nj+ \tau( \tau^{-1}(r) \circ \pi) $ for $r \in [n]$ and $j \in \{0,1,\ldots, k-1\}$, and $\pi'(kn+1)=kn+1$. Let $S=\{\pi': \pi \in \Gamma\}$.
   Then a $k$-colouring of a gain graph $(G, \phi)$  with gain group $\Gamma$ is equivalent 
   to a $k'$-colouring of a $S$-labeling $(D, \sigma)$ of $G$, where $\sigma$ is defined as 
   $\sigma(e)=\pi'$ if $\phi(e)=\pi$.
 \end{enumerate}

In this paper, we are interested in $S$-$4$-colouring of planar graphs. 

By Four Colour Theorem, if $S=\{id\}$, then every planar graph is $S$-$4$-colourable. The existence of
 non-$4$-choosable planar graphs shows that, if $S$ is the set of all permutations of integers,  not every planar graph is $S $-$4$-colourable. Note that for if $S$ of permutations of integers, $\pi \in S$ and integer $i$, if $i \notin \{1,2,\ldots, k\}$ or 
  $\pi(i) \notin \{1,2,\ldots, k\}$, then $\pi$   puts no restriction  on vertices of colour $i$ or $\pi(i)$. For this reason, we may restriction our attention to permutations of $\{1,2,\ldots, k\}$ when we consider $S$-$k$-colourings of graphs.
 An interesting problem is for which subsets $S$ of $S_4$, every planar graph is $S$-$4$-colourable.

 \begin{definition}
 \label{def-goodbad}
 Assume $S$ is a  non-empty subset of $S_4$. We say $S$ is {\em good} if every planar graph is $S$-$4$-colourable, and $S$ is {\em bad} otherwise.
 \end{definition}

The Four Colour Theorem is equivalent to say that $S=\{id\}$ is good. We are interested in the problem as which subsets $S$ of $S_4$ are good. In this paper, we consider subsets $S$ containing $id$. In some sense, we are interested in the problem as how tight is the Four Colour Theorem. If any subset $S$ of $S_4$ containing $id$ but other than $\{id\}$ is good, then that would be a strengthening of the Four Colour Theorem. As we mentioned earlier, the study of colouring of signed graphs motivated two conjectures of colouring of signed planar graphs that are strengthening of the Four Colour Theorem of this type.

Observe that if $S, S'$ are   subsets of $S_k$ and $S \subseteq S'$, then any $S'$-$k$-colourable graph is $S$-$k$-colourable. We say $S'$ is a conjugation of $S$ if there is a permutation $\pi \in S_k$ such that 
$S' = \{\pi \sigma \pi^{-1}: \sigma \in S\}$. It is easy to see that if $S'$ is a conjugation of $S$, then 
a graph $G$ is $S'$-$k$-colourable if and only if $G$ is $S$-$k$-colourable.

It was proved by 
  Kr\'{a}l,   
  Pangr\'{a}c and Voss \cite{Kral} that there are planar graphs that are not $Z_4$-colourable and there are planar graphs that are not $Z_2 \times Z_2$-colourable. This means that  $S=\{id, (1234),(13)(24), (1432)\}$
  and $S=\{id, (12)(34),(13)(24), (14)(23)\}$ are not good.  
  In this first submitted version of this paper, we showed that the following subsets of $S_4$ are bad:
  $$\{id, (123) \}, \{id, (1234) \}, \{(id, (12), (13)\}, \{id, (12)(34), (13)\}, \{id, (12)(34), (13)(24)\}.$$ 
  
  As mentioned above,  Narboni and Tarkos \cite{Tarkos} refuted the conjecture of M\'{a}\v{c}ajov\'{a}, Raspaud and Skoviera, and showed that there are planar graphs that are not signed $4$-colourable. This means that $S=\{id, (12)(34)\}$ is bad.
  Zhu \cite{Zhurefined} refuted the conjecture of Kang and Steffen, and showed that there are planar graphs that are not signed 
  $Z_4$-colourable. This means that $\{id, (12)\} $ is bad. These results covered some of the cases listed above.
   The cases uncovered by these two results are   $S=\{id, (123)\}$ and $S=\{id, (1234)\}$. In this paper, we show that $S=\{id, (123)\}$ and $S=\{id, (1234)\}$ are bad. Therefore a subset $S$ of $S_4$ containing $id$ 
   is good if and only if   $S = \{id\}$.

\section{The Proof}

In this section, for $S=\{id, (123)\}$ or $S=\{id, (1234)\}$, we construct planar graphs $G$ and 
$S$-labelings $(D, \sigma)$  of $G$ that are not $S$-4-colourable.

\begin{definition}
	A graph is uniquely $k$-colourable if there is a unique partition of $V(G)$ into $k$ independent sets.
\end{definition}

Assume $G$ is uniquely $k$-colourable, and $V_1, V_2, \ldots, V_k$ is the unique partition of $V(G)$ into $k$ independent sets. There are $k!$ ways of assigning the $k$ colours $\{1,2,\ldots, k\}$ to the independent sets. So there are actually $k!$ $k$-colourings of $G$. If $G$ is a uniquely $4$-colourable planar graph, then there are exactly $24$ $4$-colourings of $G$.

For a plane graph $G$, we denote by ${\cal F}(G)$ the set of faces of $G$.

\begin{lemma}
	\label{lemma-unique}
	There exists a uniquely $4$-colourable plane triangulation $G'$, a set ${\cal F}$ of $24$ faces of $G'$ and a one-to-one correspondence $\phi$ between ${\cal F}$ and the $24$ $4$-colourings of $G'$ such that for each $F \in {\cal F}$, $\phi_F(V(F)) = \{1,2,3\}$, where $\phi_F$ is the $4$-colouring of $G'$ corresponding to $F$ and $V(F)$ is the set of vertices incident to $F$.
\end{lemma}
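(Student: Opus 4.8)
The plan is to reduce the lemma to an elementary counting statement about planar $3$-trees, and then to build a suitable one by hand.

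\medskip\noindent\textbf{Step 1 (the class of graphs).} I would work with \emph{planar $3$-trees}: plane triangulations obtained from $K_4$ by repeatedly choosing a face and inserting into it a new vertex adjacent to the three vertices on that face. Only the easy half of the characterisation of uniquely $4$-colourable planar graphs is needed here, namely that every planar $3$-tree is uniquely $4$-colourable. This follows by induction on the number of vertices: in any proper $4$-colouring of the triangulation obtained just before the last insertion the colour classes are, by the induction hypothesis, determined up to a permutation of $[4]$, and the newly inserted vertex is then forced to take the unique colour missing on its surrounding triangle. Hence a planar $3$-tree $G'$ has a well-defined unordered partition into colour classes $V_1,V_2,V_3,V_4$, a base colouring $c_0$ with $c_0(v)=i$ for $v\in V_i$, and exactly $24$ proper $4$-colourings, namely the maps $\pi\circ c_0$ with $\pi\in S_4$.

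\medskip\noindent\textbf{Step 2 (faces versus colourings).} In such a $G'$ every face is a triangle whose three vertices receive, under $c_0$, three distinct colours, so it avoids exactly one colour; for $j\in[4]$ put $A_j=\{F\in\mathcal F(G'):V(F)\cap V_j=\emptyset\}$, so that $\mathcal F(G')$ is the \emph{disjoint} union $A_1\cup A_2\cup A_3\cup A_4$. If $F\in A_j$ then $(\pi\circ c_0)(V(F))=\pi([4]\setminus\{j\})=[4]\setminus\{\pi(j)\}$, which equals $\{1,2,3\}$ exactly when $\pi(j)=4$; there are precisely $6$ such permutations $\pi$, and the corresponding six-element sets of permutations partition $S_4$ as $j$ runs over $[4]$. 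Consequently the lemma follows from the single requirement that $|A_j|\ge 6$ for every $j\in[4]$: for each $j$ choose an injection from the six colourings $\pi$ with $\pi(j)=4$ into $A_j$, and, since $A_1,A_2,A_3,A_4$ are pairwise disjoint, the union of these four partial injections is a bijection between a $24$-element set $\mathcal F$ of faces and the $24$ colourings having the required property. (This is Hall's theorem, rendered trivial by the disjointness of the $A_j$.)

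\medskip\noindent\textbf{Step 3 (the construction).} The key local fact is that inserting a degree-$3$ vertex into a face $F\in A_j$ forces the new vertex to colour $j$, destroys $F$, and creates three new faces, exactly one of which lies in each $A_i$ with $i\ne j$; hence the vector $(|A_1|,|A_2|,|A_3|,|A_4|)$ changes by $-1$ in coordinate $j$ and by $+1$ in the other three coordinates. Starting from $K_4$, where this vector is $(1,1,1,1)$, I would insert vertices in a balanced cyclic order --- one insertion into a face of $A_1$, then of $A_2$, then of $A_3$, then of $A_4$, repeated --- and verify by a one-line computation that after three such rounds the vector is $(7,7,7,7)$, and that at every stage the coordinate about to be used is nonzero. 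The resulting graph $G'$ is a planar $3$-tree, hence a uniquely $4$-colourable plane triangulation, with $|A_j|=7\ge 6$ for each $j$, so Step~2 applies and the proof is complete.

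\medskip I do not anticipate a genuine obstacle. The two points requiring some (light) care are the verification that planar $3$-trees are uniquely $4$-colourable --- so that ``the $24$ colourings'' really are the $24$ permutations of one fixed $c_0$, which is what makes the statement $\phi_F(V(F))=\{1,2,3\}$ meaningful --- and the bookkeeping of Step~3 showing that the cyclic insertion scheme keeps the graph a triangulation while driving every $|A_j|$ past $6$.
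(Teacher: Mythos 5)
Your proposal is correct. All three steps check out: planar $3$-trees are indeed uniquely $4$-colourable by the induction you describe; the reduction in Step 2 is sound, since the six permutations with $\pi(j)=4$ partition $S_4$ as $j$ runs over $[4]$ and the sets $A_j$ are pairwise disjoint, so four injections (one per $j$) assemble into the required bijection onto a $24$-element set of faces; and the bookkeeping in Step 3 is right --- each insertion into a face of $A_j$ changes the vector by $-1$ in coordinate $j$ and $+1$ elsewhere, so three cyclic rounds take $(1,1,1,1)$ through $(3,3,3,3)$ and $(5,5,5,5)$ to $(7,7,7,7)$ without ever drawing on an empty $A_j$, giving $|A_j|=7\ge 6$ for all $j$.

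Your route differs from the paper's in how the correspondence is produced, even though both work with the same class of stacked triangulations. The paper builds an arbitrary uniquely $4$-colourable triangulation with exactly $24$ faces, pairs faces with colourings arbitrarily, and then \emph{repairs} each ``bad'' pair: for a face $F$ whose assigned colouring does not induce $\{1,2,3\}$ on $V(F)$, it inserts an extra vertex $z_F$ into $F$ and replaces $F$ by the one subface coloured $\{1,2,3\}$, noting that unique $4$-colourability is preserved. You instead control the distribution of face types during the construction itself, so that each colour is missing from at least six faces, and then the correspondence exists by a trivial counting (Hall-type) argument with no repair step. Your version isolates a clean quantitative criterion ($|A_j|\ge 6$ for all $j$ suffices) and keeps the graph fixed once built; the paper's version needs no balance in the initial construction, at the cost of a second round of vertex insertions and the (easy but necessary) check that the extension and the replacement faces behave as claimed. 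Both are short and correct.
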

\begin{proof}
	Build a plane triangulation which is uniquely $4$-colourable and which has $24$ faces. Such a graph can be constructed by starting  from a triangle $T=uvw$, and repeat the following: choose a face $F$
	(which is a triangle), add a vertex $x$ in the interior of $F$ and connect $x$ to each of the three vertices of $F$. Each iteration
	of this procedure increases the number of faces of $G$ by $2$. We stop when there are $24$ faces.
	
	Let $\phi$ be an arbitrary one-to-one correspondence between the $24$   $4$-colourings of $G$ and the $24$ faces of $G$.
	For each face $F$ of $G$, we denote by $\phi_F$ the corresponding $4$-colouring of $G$.
	
	Let ${\cal F}'$ be the set of faces $F$ for which $\phi_F(V(F)) \ne \{1,2,3\}$.
	
	For each $F \in {\cal F}'$, add a vertex $z_F$ in the interior of $F$, connect $z_F$ to each of the three vertices of $F$. The colouring $\phi_F$ is uniquely extended to $z_F$.   Hence the resulting plane triangulation $G'$ is still uniquely $4$-colourable.
	The face $F$ of $G$ is partitioned into three faces of $G'$. One of the three faces  is coloured by $\{1,2,3\}$. We denote this face by $F'$ and  use this face of $G'$ instead of the face $F$ of $G$ to be associated with   the colouring $\phi_F$ (and we denote this colouring by $\phi_{F'}$ after this operation).
	
	Let $${\cal F}=\{F': F \in {\cal F}'\} \cup ({\cal F}(G)-{\cal F}').$$
	
	The   one-to-one correspondence   between ${\cal F}$ and the $24$ $4$-colourings of $G'$ defined above satisfies the requirements of the lemma.
\end{proof}

\begin{theorem}
\label{main}
Assume $S$ is a subset of $S_4$ which contains $id$. If $S \ne \{id\}$, then   $S$ is bad.
\end{theorem}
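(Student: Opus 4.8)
The plan is to reduce the statement to two explicit cases and then to settle those by combining Lemma~\ref{lemma-unique} with a small planar gadget. For the reduction, suppose $id\in S$ and $S\neq\{id\}$, and fix any $\pi\in S\setminus\{id\}$. Since $\{id,\pi\}\subseteq S$ and every $\{id,\pi\}$-labeling is an $S$-labeling, a planar graph witnessing that $\{id,\pi\}$ is bad also witnesses that $S$ is bad; hence it suffices to prove $\{id,\pi\}$ is bad for every non-identity $\pi\in S_4$. As conjugate subsets of $S_4$ have the same colourability, I may assume $\pi$ is one of the four representatives $(12),(12)(34),(123),(1234)$ of the non-trivial conjugacy classes of $S_4$. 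The case $\pi=(12)$ is Zhu's theorem \cite{Zhurefined} and the case $\pi=(12)(34)$ is the theorem of Narboni and Tarkos \cite{Tarkos}, so it remains to prove that $S=\{id,(123)\}$ and $S=\{id,(1234)\}$ are bad.

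For the global construction, fix such an $S$, let $G',\mathcal F,\phi$ be as in Lemma~\ref{lemma-unique}, and label every edge of $G'$ by $id$, so that the proper $S$-$4$-colourings of $(G',\cdot)$ are exactly the $24$ colourings $\phi_F$. Suppose we have a planar $S$-labeled \emph{gadget} $H$ with three terminals $t_1,t_2,t_3$ on its outer face such that $H$ has a plane drawing inside a triangular region with corners $t_1,t_2,t_3$, realizable with $t_1,t_2,t_3$ in either cyclic order around the region (via reflection), and such that no proper $S$-$4$-colouring $c$ of $H$ satisfies $(c(t_1),c(t_2),c(t_3))=(1,2,3)$. Insert, into the interior of each $F\in\mathcal F$, a private copy of $H$, identifying $t_1,t_2,t_3$ with the vertices of $F$ that $\phi_F$ colours $1,2,3$ and using the reflection of $H$ compatible with the cyclic order of $V(F)$. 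The resulting graph $G$ is planar, and if $c$ were a proper $S$-$4$-colouring of $G$ then $c|_{V(G')}$ would be some $\phi_F$, so the copy of $H$ in $F$ would be properly coloured with $(c(t_1),c(t_2),c(t_3))=(1,2,3)$, a contradiction; thus $G$ is not $S$-$4$-colourable and $S$ is bad.

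It remains to build the gadgets. In both cases $H$ is a ``triforce'': new vertices $a,b,c$ with $a$ joined to $t_1,t_2$, with $b$ joined to $t_2,t_3$, with $c$ joined to $t_1,t_3$, together with the triangle $abc$; this plainly fills the triangle $t_1t_2t_3$. Assume $c(t_i)=i$. For $S=\{id,(1234)\}$, orient and label the six terminal edges so that each of $a,b,c$ is forced into $\{1,2\}$: at $a$, for instance, use the arc $(a,t_1)$ with label $(1234)$, which forbids $c(a)=4$ because $(1234)(4)=1=c(t_1)$, and the arc $(t_2,a)$ with label $(1234)$, which forbids $c(a)=3$ because $(1234)(2)=3$; treat $b$ and $c$ symmetrically. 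Label $ab,bc,ca$ by $id$. Then $a,b,c$ would need three distinct colours from $\{1,2\}$, which is impossible. For $S=\{id,(123)\}$ one cannot forbid colour $4$ at a vertex all of whose neighbours lie in $\{1,2,3\}$, so label all six terminal edges by $id$, which gives $c(a)\in\{3,4\}$, $c(b)\in\{1,4\}$, $c(c)\in\{2,4\}$, and label the arcs $(a,b),(b,c),(c,a)$ by $(123)$. Chasing the implications around this directed triangle (using $(123)(4)=4$), one finds that $c(a)=3$ forces $c(b)=4$, then $c(c)=2$, then $c(a)=4$, while $c(a)=4$ forces $c(b)=1$, then $c(c)=4$, then $c(a)=3$ — a contradiction either way.

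The routine points are the conjugacy reduction and the assembly step, and the heart of the proof is the two gadgets. The main obstacle is that the naive idea of forbidding all four colours at a single new vertex cannot work inside a triangular face: a new vertex in a triangulated region has only three neighbours, hence only three forbidden colours. This is exactly what forces the two indirect devices above — confining the auxiliary vertices to the two colours $\{1,2\}$ when $\pi=(1234)$, and the frustrated directed $(123)$-triangle when $\pi=(123)$. One must also check that both reflections of $H$ are available, so that the single gadget $H$ copes with each of the two cyclic colour patterns that $\phi_F$ may impose on $V(F)$.
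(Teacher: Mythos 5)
Your proposal is correct and follows essentially the same route as the paper: the same reduction to $\pi\in\{(123),(1234)\}$ via conjugacy and the cited results of Narboni--Tarkos and Zhu, the same use of Lemma~\ref{lemma-unique}, and the same ``triforce'' gadget inserted into each of the $24$ designated faces, with your $(123)$ gadget identical to the paper's up to relabeling of $b$ and $c$. Your $(1234)$ labeling (using six labeled terminal arcs to confine $a,b,c$ to $\{1,2\}$ and then letting the $id$-labeled triangle $abc$ give the contradiction) differs in detail from the paper's four-arc labeling and two-case analysis, but it checks out and is, if anything, slightly cleaner.
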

\begin{proof}
	Assume $S$ is a subset of $S_4$ which contains $id$. Assume $\pi \in S - \{id\}$.
	If $\pi=(12)$ or $\pi=(12)(34)$, then it follows from the above mentioned results of    
	  Narboni and Tarkos \cite{Tarkos}, and the result of Zhu \cite{Zhurefined} that $S$ is bad.
	 Thus we may assume that $\pi=(123)$ or $\pi=(1234)$.
	
By Lemma \ref{lemma-unique}, there is a    uniquely $4$-colourable plane triangulation $G'$,    a set ${\cal F}$ of $24$ faces of $G'$ and    a one-to-one correspondence $\phi$ between ${\cal F}$ and the $24$ $4$-colourings of $G'$ so that for each $F \in {\cal F}$, $\phi_F(V(F))=\{1,2,3\}$.

For each face $F \in {\cal F}$,  for  $i \in \{1,2,3\}$, let $v_{F,i}$ be the vertex with $\phi_F(v_{F,i})=i$.
Note that a vertex $v$ may belong to distinct faces $F,F' \in {\cal F}$. If $\phi_F(v)=i$ and $\phi_{F'}(v)=j$, then 
$v=v_{F,i} = v_{F', j}$. 
 
\begin{itemize}
\item   add a triangle $T_F = a_Fb_Fc_F$ in the interior of $F$;
\item   connect $a_F$ to   $v_{F,1}$ and $v_{F,2}$; connect $b_F$ to   $v_{F,1}$ and $v_{F,3}$; connect $c_F$ to $v_{F,2}$ and $v_{F,3}$.
\end{itemize}

We denote the resulting plane triangulation by $G$. 

\bigskip

\noindent 1:    If $S=\{id, (123)\}$, then $S$ is bad.

Let $D$ be the orientation of $G$ in which  the edges of $T_F$ are oriented as $(b_F,a_F)$, $(c_F,b_F)$, $(a_F,c_F)$.
The orientation of the other edges are arbitrary (and irrelevant).
See Figure 1.

\begin{figure}[h]
	\label{fig_123}
	\begin{center}
		\includegraphics[scale=0.7]{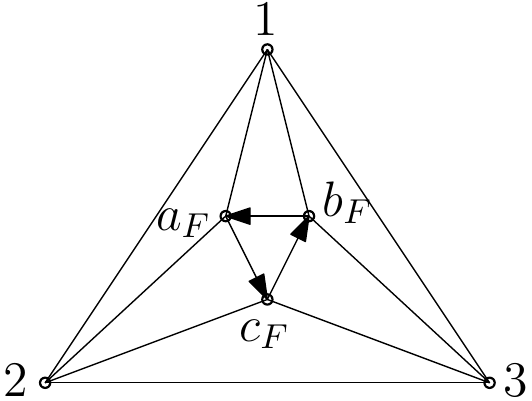} %\quad \quad
	\end{center}
	\caption{$S=\{id, (123) \}$ is bad, where arrowed edges $e$ have $\sigma_e=(123)$}
\end{figure}

 Let $\sigma: E(D) \to S$ be defined as $\sigma_e=id$ for all edges $e$, except that for each $F \in {\cal F}$, for the three edges $e$ of $T_F$, $\sigma_e=(123)$.

Now we show that $G$ is not $\sigma$-colourable.

Assume $\psi$ is an $\sigma$-colouring of $G$. Then the restriction of $\psi$ to $G'$ is a proper 4-colouring of $G'$. As $G'$ is uniquely 4-colourable, the restriction of $\psi$ to $G'$ equals $\phi_F$ for some $F \in {\cal F}$.
Consider the triangle $T_F$. The vertex $a_F$ is adjacent to vertices of colours 1 and 2 by edges $e$ with $\sigma_e=id$. Hence, $\psi(a_F) \neq 1,2.$ So $\phi(a_F) \in \{3,4\}$. Similarly, $  \psi(b_F) \in \{2,4\}$ and $\psi(c_F) \in \{1,4\}$.

 If $\psi(a_F)=4$, then since $\sigma_{(b_F, a_F)}(4) = 4$, we conclude that $\psi(b_F) \ne 4$, hence $\psi(b_F)=2$.
 Similarly, we must have $\psi(c_F)=1$.
 As $\sigma_{(c_F, b_F)}(1) =2$,    this is a contradiction.

  Assume $\psi(a_F)=3$. Then the same argument shows that 
   $\psi(c_F) =  \psi(b_F) = 4$. As $\sigma_{(c_F,b_F)} (4)=4$, this is a contradiction.

 \bigskip
 
 \noindent 2: If $S=\{id, (1234) \}$, then $S$ is bad.

 Let $D$ be the orientation of $G$ in which   the   edges incident to $a_F,b_F,c_F$ are oriented   as $(c_F,a_F), (c_F,b_F), (v_{F,3}, c_F)$ and $(v_{F,2},a_F)$, and 
   the other edges are oriented arbitrary.
  See Figure 2.

\begin{figure}[h]
	\label{fig-1234}
	\begin{center}
		\includegraphics[scale=0.7]{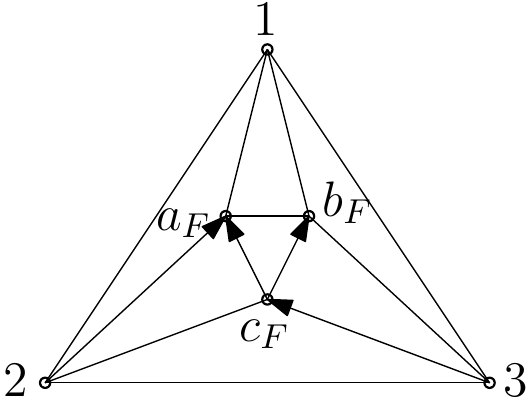} %\quad \quad
	\end{center}
	\caption{$S=\{id, (1234) \}$ is bad, where arrowed edges $e$ have $\sigma_e=(1234)$ }
\end{figure}

 Let $\sigma: E(D) \to S$ be defined as $\sigma_e=id$ for all edges $e$, except that for each $F \in {\cal F}$,  for $e \in \{(v_{F,2},a_F), (c_F,a_F), (c_F,b_F), (v_{F,3},c_F)\}$,  $\sigma_e=(1234)$.

 Now we show that $G$ is not $\sigma$-colourable.

 Assume $\psi$ is an $\sigma$-colouring of $G$. Then the restriction of $\psi$ to $G'$  
 equals $\phi_F$ for some $F \in {\cal F}$.  The same argument as before shows that $\psi(a_F) \in \{2,4\}$,
 $\psi(b_F) \in \{2,4\}$ and $\psi(c_F) \in \{1,3\}$.

 If $\psi(c_F)=1$, then since $\sigma_{(c_F, a_F)}(1) = 2$, we conclude that $\psi(a_F) \ne 2$, hence $\psi(a_F)=4$.
 Similarly, we must have $\psi(b_F)=4$.
 But $\sigma_{ (a_F,b_F)  }(4) =4$,    this is a contradiction.

 If $\psi(c_F)=3$, then since $\sigma_{(c_F, a_F)}(3) = 4$, we conclude that $\psi(a_F) \ne 4$, hence $\psi(a_F)=2$.
 Similarly, we must have $\psi(b_F)=2$.
 Again  this is a contradiction.

 This completes the proof of Theorem \ref{main}.
 \end{proof}

%% main text

\end{document}